\newtheorem{theorem}{Theorem}
\newtheorem{lemma}[theorem]{Lemma}
\theoremstyle{definition}
\renewcommand\le{\leqslant}
\renewcommand\ge{\geqslant}
\newcommand\eps{\varepsilon}
\newcommand\la{\lambda}
\newcommand\las{\lambda_*}
\renewcommand\Pr{{\mathbb P}}
\newcommand\E{{\mathbb E}}
\newcommand\Var{{\mathrm{Var}}}
\newcommand\dto{\overset{\mathrm{d}}{\to}}
\newcommand\tX{{\widetilde X}}
\newcommand\op{o_{\mathrm{p}}}
\newcommand\Op{O_{\mathrm{p}}}
\newcommand\cF{\mathcal{F}}
\newcommand\bb[1]{\bigl(#1\bigr)}
\newcommand\Bb[1]{\Bigl(#1\Bigr)}
\newcommand\bp{{\mathfrak X}}
\begin{document}
\title{Asymptotic normality of the size of the giant component in a random hypergraph}
\author{B\'ela Bollob\'as%
\thanks{Department of Pure Mathematics and Mathematical Statistics,
Wilberforce Road, Cambridge CB3 0WB, UK and
Department of Mathematical Sciences, University of Memphis, Memphis TN 38152, USA.
E-mail: {\tt b.bollobas@dpmms.cam.ac.uk}.}
\thanks{Research supported in part by NSF grants DMS-0906634,
 CNS-0721983 and CCF-0728928, ARO grant W911NF-06-1-0076,
and TAMOP-4.2.2/08/1/2008-0008 program of the Hungarian Development Agency}
\and Oliver Riordan%
\thanks{Mathematical Institute, University of Oxford, 24--29 St Giles', Oxford OX1 3LB, UK.
E-mail: {\tt riordan@maths.ox.ac.uk}.}}
\date{December 15, 2011}
\maketitle

\begin{abstract}
Recently, we adapted random walk arguments based on work
of Nachmias and Peres, Martin-L\"of, Karp and Aldous to give a simple proof
of the asymptotic normality of the size
of the giant component in the random graph $G(n,p)$ above the phase
transition. Here we show that the same method applies to 
the analogous model of random $k$-uniform hypergraphs, establishing
asymptotic normality throughout the (sparse) supercritical regime.
Previously, asymptotic normality was known only towards the
two ends of this regime.
\end{abstract}

\section{Introduction and results}

Let $H_k(n,p)$ denote the random $k$-uniform hypergraph with vertex
set $[n]=\{1,2,\ldots,n\}$ in which each of the $\binom{n}{k}$
possible edges is present independently with probability $p$. Thus
$H_2(n,p)$ is the classical random graph $G(n,p)$. Our aim here is to
study the component structure of $H_k(n,p)$, in particular the distribution
of the size of the largest component above the `phase transition'.

Before turning to the details, let us note that the notion of a
`component' in a $k$-uniform hypergraph $H_k$ can be interpreted in a
number of ways. For any $1\le r\le k-1$, one could consider two edges
to be `connected' if they share at least $r$ vertices (or perhaps
exactly $r$ vertices), and use this notion to define the components of
$H_k$. Moreover, the size of a component could then be measured in a
number of ways -- either by the number of vertices that it contains,
or (probably more naturally) by the number of $r$-sets of vertices, or
the number of edges. In the rest of the paper we consider $r=1$. It seems
that other values of $r$ have received very little
attention, although the corresponding notions of `cycle' (with $r=1$
being `loose' and $r=k-1$ being `tight') have been studied
extensively.  Note that for hypergraphs derived from $k$-cliques in
random graphs, questions about components defined using $r=k-1$ were
raised by Der{\'e}nyi, Palla and Vicsek~\cite{DPV}; such components
were studied for all $1\le r\le k-1$ in~\cite{cliqueperc}.  We shall
say nothing further about the case $r\ge 2$, except to note that
(greatly simplified versions of) the branching process arguments
in~\cite{cliqueperc} presumably show that the threshold for the
emergence of a giant component is at $p\sim n^{r-k}(k-r)!/(\binom{k}{r}-1).$

For the rest of the paper we take $r=1$, i.e., we say that two vertices
are \emph{connected} in a $k$-uniform hypergraph $H$ if they are connected in the graph
obtained by replacing each edge by a copy of $K_k$, and take the \emph{components}
of $H$ to be the maximal sub-hypergraphs in which all vertices are connected.
For reasons that will become clear below we write $p=p(n)$ as $\lambda(k-2)!n^{-k+1}$,
where $\la=\la(n)$; when $k=2$ this reduces to $p=\la/n$. Our main aim is to study the
number $L_1$ of vertices in the largest component of $H_k(n,p)$ in the supercritical regime,
i.e., when $(\la-1)n^{1/3}\to\infty$. We also prove a result for the critical
regime, where $(\la-1)n^{1/3}$ is bounded.

For $k=2$, very detailed results of this type are known.
Pittel and Wormald~\cite{PWio} and Luczak and \L uczak~\cite{LuczakLuczak} showed,
in each case as part of a much stronger and/or more general result, that throughout
the supercritical regime, $L_1$ is asymptotically normal: centralized and scaled
appropriately, it converges in distribution to a standard normal distribution.
The special case where $\la>1$ is constant was proved earlier by Stepanov~\cite{Stepanov_conn}.
For hypergraphs, where $k\ge3$ is fixed, much less is known: Karo\'nski and {\L}uczak~\cite{KL_giant} proved
strong results (a local limit theorem) in the barely supercritical phase,
when $(\la-1)^3n$ tends to infinity but more slowly than $\log n/\log\log n$.
At the other end of the range, Behrisch, Coja-Oghlan and Kang~\cite{BC-OK1} proved
a local limit theorem when $\la>1$ is fixed. Here we shall prove asymptotic
normality throughout the supercritical regime, for all $k\ge3$ fixed.
Note that our main result, while less precise than those of~\cite{KL_giant,BC-OK1},
has a much greater range of applicability. The proof is a (to us surprisingly) simple
adaptation of the argument we gave for the case $k=2$ in~\cite{BR_walk}, itself
based on exploration and martingale arguments using ideas of Nachmias and Peres~\cite{NP_giant},
Martin-L\"of~\cite{ML86}, Karp~\cite{Karp} and Aldous~\cite{Aldous}.

Given $\la>1$, let $\las$ be the `dual branching process parameter', defined by $\las<1$ and
\[
 \las e^{-\las} = \la e^{-\la}.
\]
Writing $\bp_\mu$ for the Galton--Watson branching process in which the offspring
distribution is Poisson with mean $\mu$, it is well known that conditioning
$\bp_{\la}$ on extinction gives $\bp_{\las}$.
Let $\rho_{\la}=\rho_{2,\la}$ denote the survival probability of $\bp_\la$,
so $\rho_{\la}>0$ may be defined by
\begin{equation}\label{rldef}
 1-\rho_{\la} = e^{-\la \rho_\la},
\end{equation}
and satisfies $\las=\la\rho_\la$.
Finally, for $k\ge3$ define $\rho_{k,\la}$ by
\begin{equation}\label{rkldef}
 1-\rho_{k,\la} = (1-\rho_{\la})^{1/(k-1)};
\end{equation}
it is easy to see that $\rho_{k,\la}$ is the survival probability of a certain 
branching process naturally associated to $H_k(n,p)$, where $p=\la (k-2)! n^{-k+1}$.

As usual, we say that a sequence $(E_n)$ of events holds \emph{with
high probability} or \emph{whp} if $\Pr(E_n)\to 0$ as $n\to\infty$.
If $(X_n)$ is a sequence of random variables and $f(n)$ is a deterministic
function, then $X_n=\op(f(n))$ means
that $X_n/f(n)$ converges to $0$ in probability, i.e., that for any constant
$\eps>0$, $|X_n|\le\eps f(n)$ holds whp. Later, we shall also use $X_n=\Op(f(n))$
to mean that $X_n/f(n)$ is bounded in probability, i.e., for any $\eps>0$
there is a $C$ such that for all (large enough) $n$ we have $\Pr(|X_n|\ge C f(n))\le \eps$.

Writing $L_1(H)$ for the maximum number of vertices in any component of a hypergraph $H$,
Coja-Oghlan, Moore and Sanwalani~\cite{C-OMS} showed that
if $k\ge3$ and $\la>1$ are fixed and $p=p(n)=\la (k-2)! n^{-k+1}$, then
$L_1(H_k(n,p))=\rho_{k,\la}n+\op(n)$. 
(This result was certainly known as `folklore'
before this.) Our main result concerns the limiting distribution of the $\op(n)$
term, and applies throughout the supercritical regime.

Given $k\ge 2$ and $\la>1$, let
\begin{equation}\label{skldef}
 \sigma_{k,\la}^2   = \frac{ \la(1-\rho)^2 - \las(1-\rho) +\rho(1-\rho) }{(1-\las)^2} n,
\end{equation}
where $\rho=\rho_{k,\la}$. It is well known that when $\la=1+\eps$ and $\eps\to 0$,
then $\rho_{\la}\sim 2\eps$. From \eqref{rkldef} it follows that
\[
 \rho_{k,\la} \sim \frac{2\eps}{k-1},
\]
and thus $\sigma_{k,\la}^2\sim 2\eps^{-1}n$. Expanding $\las$ and thus $\rho_{\la}$
and hence $\rho_{k,\la}$ further as series in $\eps=1-\la$, it is easy to check
that in fact
\[
 \sigma_{k,\la}^2 = \left(2\eps^{-1} +  \frac{2(k-4)}{k-1} +O(\eps)\right)n.
\]
Thus, although the leading term does not depend on $k$, the next term does.

\begin{theorem}\label{th+}
Let $k\ge 3$ be fixed, and let
\[
 p=p(n)= \la  (k-2)!  n^{-k+1},
\]
where $\la=\la(n)$ is bounded and $(\la-1)^3n\to\infty$.
Then
\[
 \frac{L_1(H_k(n,p))-\rho_{k,\la}n}{\sigma_{k,\la}} \dto N(0,1),
\]
where $\dto$ denotes convergence in distribution, $N(0,1)$ is a standard
normal random variable, and $\rho_{k,\la}$ and $\sigma_{k,\la}$ are defined
in \eqref{rkldef} and \eqref{skldef}.
\end{theorem}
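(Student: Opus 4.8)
I would follow the exploration/martingale strategy of~\cite{BR_walk}, replacing edges by hyperedges. Run a vertex-by-vertex exploration of $H=H_k(n,p)$: keep a set of \emph{reached} vertices, split into \emph{active} (reached, not yet processed) and processed ones; at step $t$, process one active vertex $v$, or---if none is active---activate the least unreached vertex and then process it; processing $v$ means revealing every hyperedge through $v$ none of whose other vertices has been processed yet, and activating every not-yet-reached vertex contained in such a hyperedge. Let $\eta_t$ be the number of vertices activated at step~$t$. Conditional on $\cF_{t-1}$, the number of hyperedges revealed at step~$t$ has distribution $\Bi\bb{\binom{n-t}{k-1},p}$, independent of the past, and each such hyperedge contributes to $\eta_t$ its number of not-yet-reached vertices (coincidences between two such hyperedges total $\Op(1)$ over the whole run and are negligible). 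The active count runs through one excursion per component, with net increment $\eta_t-1$, and the partial sums $S_t=\sum_{r\le t}(\eta_r-1)$ have the property that the components of $H$, in order of exploration, correspond to the stretches of $(S_t)$ between successive record minima, each stretch length being the number of vertices of the corresponding component. In particular, whp $L_1$ is the length of the long (``giant'') excursion.

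Next I would identify the fluid limit. With $s=t/n$ and $a(s)=A_{\lfloor ns\rfloor}/n$, one gets $\E[\eta_t-1\mid\cF_{t-1}]=\la(1-s)^{k-2}\bb{1-(t+A_{t-1})/n}-1+o(1)$, so the deterministic trajectory of the giant's excursion solves $a'=\la(1-s)^{k-2}(1-s-a)-1$ with $a(0)=0$. Setting $w=1-s-a$ gives the separable equation $w'=-\la(1-s)^{k-2}w$, hence $w(s)=\exp\bb{-\tfrac{\la}{k-1}\bb{1-(1-s)^{k-1}}}$; the excursion ends at the first $\beta>0$ with $a(\beta)=0$, i.e.\ $w(\beta)=1-\beta$. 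Writing $q=1-\beta$ this reads $q^{k-1}=e^{-\la(1-q^{k-1})}$, which by \eqref{rldef}--\eqref{rkldef} is exactly $q=1-\rho_{k,\la}$; thus $\beta=\rho_{k,\la}$, the giant's excursion has length $\rho_{k,\la}n+\op(n)$, and $a'(\beta)=\la(1-\rho_\la)-1=\las-1\ne0$ (using $\las=\la(1-\rho_\la)$).

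It then remains to upgrade this to a central limit theorem. As in~\cite{BR_walk}, an a priori martingale bound shows that, over the giant's excursion, $A_t$ stays within $\op(\sqrt n)$ of $na(t/n)$; linearising the drift about $a(\cdot)$ and invoking a martingale CLT for triangular arrays, the rescaled fluctuation $\bb{A_{\lfloor ns\rfloor}-na(s)}/\sqrt n$ converges to the Gaussian process $Z$ solving $dZ_s=\tfrac{w'(s)}{w(s)}Z_s\,ds+\sqrt{v(s)}\,dW_s$, where $v(s)$ is the explicit limiting conditional variance of $\eta_t-1$. Since $Z_s/w(s)$ is then a martingale, $\Var(Z_\beta)=q^2\int_0^\beta v(r)w(r)^{-2}\,dr$. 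The giant's excursion starts at a random time $\tau_1$ equal to the total size of the (subcritical-type) components explored before it; here the hypothesis $(\la-1)^3n\to\infty$ is used, and used essentially exactly, to guarantee $\tau_1=\op(\sigma_{k,\la})$. Because $a'(\beta)\ne0$, the excursion's first return to $0$ occurs at time $n\beta+\sqrt n\,Z_\beta/(1-\las)+\op(\sqrt n)$, so
\[
 \frac{L_1(H_k(n,p))-\rho_{k,\la}n}{\sqrt n}=\frac{Z_\beta}{1-\las}+\op(1)\ \dto\ N\!\left(0,\ \frac{q^2}{(1-\las)^2}\int_0^\beta\frac{v(r)}{w(r)^2}\,dr\right),
\]
and a direct (if tedious) computation of $v$ and of the integral identifies the variance with $\sigma_{k,\la}^2/n$ from \eqref{skldef}.

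The main obstacle, relative to the case $k=2$, is the compound nature of $\eta_t$: during the giant's excursion a positive fraction of the $k-1$ non-$v$ slots of a revealed hyperedge are typically already active, so that the drift and---more delicately---the variance $v(s)$ genuinely differ from the graph case and must be computed carefully; moreover all of the estimates (the a priori bound, the martingale CLT, and $\tau_1=\op(\sigma_{k,\la})$) have to be made uniform in $\la$ right up to the edge of the supercritical regime permitted by $(\la-1)^3n\to\infty$.
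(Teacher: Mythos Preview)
Your proposal is correct and follows essentially the same route as the paper: the discrete recursion $X_{t+1}=(1-\alpha_{t+1})X_t+\cdots$ solved there with the integrating factor $\beta_t=\prod_{i\le t}(1-\alpha_i)$ is precisely your linear SDE $dZ=\tfrac{w'}{w}Z\,ds+\sqrt{v}\,dW$ in continuous-time dress, and your endpoint slope $a'(\beta)=\las-1$ is the paper's $g'(\rho)=-(1-\las)$. The one point the paper makes more explicit is the control of the component-start count $C_t$ via the deterministic bound $|X_t-\tX_t|=O(tC_t/n)$ (Lemma~\ref{Wc}), which is what actually drives both your a~priori estimate $A_t=na(t/n)+\op(\sqrt n)$ and the bound $\tau_1=\op(\sigma_{k,\la})$; your display with error $\op(1)$ after dividing by $\sqrt n$ is literally correct only for $\la$ bounded away from~$1$, but you already note that the right normalization is $\sigma_{k,\la}$ and that all errors must be tracked uniformly as $\eps\to0$.
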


As a by-product of our proof, we obtain an analogue of the result
of Aldous~\cite{Aldous} giving the limiting distribution of the rescaled
large component sizes inside the scaling window of the phase transition.
Define a stochastic process $W^\alpha(s)$ (a random function on $[0,\infty)$) by
\[
 W^\alpha(s)= W(s) +\alpha s-s^2/2,
\]
where $W(s)$ is a standard Brownian motion. As in~\cite{Aldous},
define an \emph{excursion} of this process to be a maximal
interval on which $W^\alpha$ exceeds its previous minimum value,
and let $(|\gamma_i|)_{i\ge 1}$ denote the lengths of the excursions sorted
into decreasing order. (Aldous shows that this makes sense with probability 1.)

\begin{theorem}\label{th_crit}
Let $k\ge 3$ be fixed, and let $p=p(n)= \la  (k-2)!  n^{-k+1}$
where $\la=\la(n)$ satisfies
\[
 (\la-1)^3n\to(k-1)^2\alpha^3
\]
for some $\alpha\in\mathbb{R}$. Then, for any fixed $r$, writing $L_r$
for the number of vertices in the $r$th largest component
of $H(n,p)$, the sequence $((k-1)^{1/3}n^{-2/3}L_i)_{i=1}^r$
converges in distribution to $(|\gamma_i|)_{i=1}^r$
where $|\gamma_i|$ is defined as above.
\end{theorem}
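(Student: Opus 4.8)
The plan is to run the same vertex-by-vertex exploration of $H=H_k(n,p)$ that underlies the proof of Theorem~\ref{th+}, but now in the critical scaling, to identify the limit of the resulting walk as $W^\alpha$, and then to quote Aldous's analysis of the excursions of $W^\alpha$. Recall the exploration: replacing each hyperedge of $H$ by a $k$-clique gives a graph $G$, and we run breadth-first search in $G$, revealing the hyperedges of $H$ as we go; when a vertex $v$ is explored we test every hyperedge through $v$ none of whose other $k-1$ vertices has yet been explored, and every not-yet-seen vertex lying in a present such edge becomes active. Carrying this out over all of $[n]$, starting a fresh component whenever no vertex is active, I let $A(t)$ be the number of active vertices after $t$ steps and set $S(t)=A(t)$ (not reset between components). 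Exactly as when $k=2$, each step explores one new vertex, so the components of $H$ are in bijection with the excursions of $S$ above its running minimum, the length of an excursion being the number of vertices in the corresponding component.

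First I would extract the martingale decomposition of $S$. If the $t$-th vertex explored is $v$ and $m=m(t-1)$ vertices have been seen, then up to the standard coupling error the number of hyperedges revealed at $v$ is $\mathrm{Poisson}\bigl(\tfrac{\la}{k-1}(1-m/n)^{k-1}\bigr)$, and since each of the $k-1$ further endpoints of such an edge is seen only with probability $O(m/n)$, all but an $O(m/n)$ fraction of those endpoints are new; hence $S(t)-S(t-1)=-1+(k-1)\eta_t+\xi_t$, where $\eta_t$ is conditionally Poisson of mean $\tfrac{\la}{k-1}(1-m/n)^{k-1}$ and $\xi_t$ collects lower-order corrections. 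So the conditional mean increment is $\eps-(k-1)m/n+O(m^2/n^2)+O(1/n)$ with $\eps=\la-1$, and the conditional variance is $k-1+o(1)$. One shows $A(t)=\Op(n^{1/3})$ on $[0,Tn^{2/3}]$, hence $m(t)=t+\Op(n^{1/3})$ there, so over the first $t$ steps $S$ has drift $\eps t-(k-1)t^2/(2n)+\op(n^{1/3})$ and quadratic variation $(k-1)t+\op(n^{2/3})$. Using $\eps\sim (k-1)^{2/3}\alpha n^{-1/3}$, one checks that with
\[
 \widehat S(s) := \frac{S\bigl((k-1)^{-1/3}n^{2/3}s\bigr)}{(k-1)^{1/3}n^{1/3}}
\]
the drift of $\widehat S$ over $[0,s]$ tends to $\alpha s-s^2/2$ and its quadratic variation at time $s$ tends to $s$. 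A martingale functional central limit theorem, whose Lindeberg condition is immediate from the Poisson tails as in~\cite{BR_walk,Aldous}, then gives $\widehat S\dto W^\alpha$ in $D[0,T]$ for each fixed $T$. This is exactly the $k=2$ computation of~\cite{BR_walk} with $k-1$ new vertices per discovered edge in place of one, which is what forces the constants $(k-1)^{1/3}$ and $(k-1)^{2/3}$.

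Next I would pass from the walk to component sizes. Since one step of $S$ corresponds to one vertex, the bijection above identifies the ordered lengths of the excursions of $\widehat S$ within $[0,T]$ with $\bigl((k-1)^{1/3}n^{-2/3}L_i^{(T)}\bigr)_{i\ge1}$, where $L_i^{(T)}$ is the size of the $i$-th largest component discovered before step $(k-1)^{-1/3}n^{2/3}T$. Aldous showed that the map sending a path to its ordered sequence of excursion lengths is a.s.\ continuous at $W^\alpha$, so for each fixed $T$ and $r$ the vector $\bigl((k-1)^{1/3}n^{-2/3}L_i^{(T)}\bigr)_{i=1}^r$ converges in distribution to the ordered excursion lengths of $W^\alpha$ on $[0,T]$, which in turn converge to $(|\gamma_i|)_{i=1}^r$ as $T\to\infty$.

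It remains to remove the truncation, i.e.\ to show that whp, for suitable $C=C(\eta,\alpha,r)$, no component of more than $\eta n^{2/3}$ vertices is discovered after step $Cn^{2/3}$; this is the main obstacle. After step $Cn^{2/3}$ the conditional mean increment of $S$ is at most $\eps-(k-1)Cn^{-1/3}+o(n^{-1/3})\le -cC n^{-1/3}$ for large $C$ (with $c=c(k)>0$), so any excursion of length $\ell$ beginning after that time forces its martingale part to rise by at least $\asymp \ell C n^{-1/3}$ over $\ell$ steps. Since the increments of $S$ have exponential tails and the quadratic variation over $\ell$ steps is $\Theta(\ell)$, a Bernstein-type bound gives probability $\exp\bigl(-\Omega(\eta C^2)\bigr)$ for this when $\ell\ge\eta n^{2/3}$, and a union bound over the at most $n^2$ choices of starting time and length makes the total probability $o(1)$ once $C$ is large. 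Hence whp the $r$ largest components of $H$ are all discovered before step $Cn^{2/3}$; combining this with the previous paragraph and letting $C,T\to\infty$ after $n$ yields $\bigl((k-1)^{1/3}n^{-2/3}L_i\bigr)_{i=1}^r\dto(|\gamma_i|)_{i=1}^r$, as required.
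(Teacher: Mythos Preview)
Your proposal is correct and follows essentially the same route as the paper: run the exploration walk, compute that in the critical window the drift is $\eps t-(k-1)t^2/(2n)+\op(n^{1/3})$ and the step variance is $k-1+o(1)$, rescale time by $(k-1)^{-1/3}n^{2/3}$ and space by $(k-1)^{1/3}n^{1/3}$ to obtain convergence to $W^\alpha$, and then invoke Aldous's excursion analysis. The paper obtains the drift via the deterministic trajectory $x_t=ng(t/n)+O(1)$ and the Taylor expansion \eqref{gexp}, whereas you compute it directly from the Poisson approximation to the number of revealed hyperedges, but these are the same calculation; you also supply the tightness/truncation argument that the paper simply defers to~\cite{Aldous}.
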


The slightly strange scaling is chosen to match the graph case: the conclusion
is that under these assumptions, up to a $(k-1)^{1/3}$ scaling factor,
the large component sizes have the same limiting distribution
as in the random graph $G(n,(1+\alpha n^{-1/3})/n)$.

\section{Proofs}

In this section we shall prove Theorems~\ref{th+} and~\ref{th_crit}.
The arguments, which closely follow those in~\cite{BR_walk}, require a little preparation.

Let $H=H_k(n,p)$ be the random $k$-uniform hypergraph defined in the introduction.
Our proofs are based on an algorithm for `exploring' the components of $H$
in $n$ steps. For $0\le t\le n$, `time
$t$' refers to the situation after $t$ steps, so
step $t$ goes from time $t-1$ to time $t$. In step $t$ we shall
`explore' a vertex $v_t$, meaning that we reveal all edges
incident with $v_t$ but not with any previously explored
vertices. Noting that one vertex is explored in each step, 
this means that, however $v_t$
is chosen, each of the $\binom{n-t}{k-1}$ possible
edges containing $v_t$ and not containing any previously explored
vertices will be present with probability $p$, independently of the others
and of the history.

More precisely, as in~\cite{BR_walk}, at time $t$ every
vertex is either `explored', `active' or `unseen'. We write $A_t$ and $U_t$ 
for the numbers of active and unseen vertices; exactly $t$ vertices
will be explored by time $t$, so $A_t+U_t=n-t$.
At time $t=0$, we have $A_0=0$ and $U_0=n$.
Fix an order on the vertices. In step $1\le t\le n$ we choose
$v_t$ to be the first active vertex (at time $t-1$), if there are any;
otherwise $v_t$ is the first unseen vertex.
In the latter case
we say that we `start a new component' in step $t$.
In step $t$ we reveal all edges containing $v_t$ and not containing any explored vertex.
Let $\eta_t$ be the number of unseen vertices other than $v_t$
in such edges. These $\eta_t$ vertices are now labelled active (at time $t$),
and $v_t$ is labelled as explored.
It is easy to check that the process reveals the components of $H$
one-by-one, starting a new component in step $t$ whenever $A_{t-1}=0$.
Thus, if $0=t_0<t_1<t_2<\cdots<t_k=n$ enumerates $\{t:A_t=0\}$, then
the sequence $(t_i-t_{i-1})_{i=1}^k$
lists exactly the numbers of vertices in the components of $H$, in some order.
In particular,
\begin{equation}\label{L1}
 L_1 = \max\{t_i-t_{i-1} : 1\le i\le k\}.
\end{equation}

We shall study the random walk $(X_t)$ defined by $X_t=A_t-C_t$,
where $C_t$ is the number of new components started within the first $t$
steps. As in~\cite{BR_walk}, we have $t_i=\inf\{t:X_t=-i\}$.
In step $t$, exactly $\eta_t$ vertices change state from unseen
to active. Furthermore, one vertex $v_t$ changes its state to explored: if a new
component is started in step $t$, then $v_t$ was previously unseen, otherwise it was
active. Hence, if we start a new component, then $A_t=A_{t-1}+\eta_t$,
otherwise $A_t=A_{t-1}+\eta_t-1$. Since $A_0=C_0=0$, it follows that
\[
 X_t = A_t-C_t = \sum_{i=1}^t (\eta_i-1).
\]

Let $(\Omega,\cF,\Pr)$ be a probability space supporting our random hypergraph $H_k(n,p)$. (For
example, take $\Omega$ to be the set of all $2^{\binom{n}{k}}$ $k$-uniform hypergraphs on $[n]$,
$\cF$ to be the power-set of $\Omega$, and $\Pr$ to be the appropriate probability measure.)
Let $\cF_t\subseteq\cF$ denote the sub-sigma-field generated by all information revealed by time~$t$.
Following the strategy of~\cite{BR_walk}, the key task is to understand the distribution of $\eta_{t+1}$
given $\cF_t$. Crucially, it is only the expectation that we need to bound precisely; 
our bound on the variance can be much cruder.

Given $\cF_t$,
we know which vertex $v_{t+1}$ we are about to explore in step $t+1$.
At time $t$ there are $U_t'=U_t-1_{\{A_t=0\}}=U_t-(C_{t+1}-C_t)$ unseen vertices $u$ other than $v_{t+1}$. 
(The vertex $v_{t+1}$ is active if $A_t>0$; otherwise it is unseen.)
For each such unseen vertex $u$ there are exactly
\begin{equation}\label{ct}
 c_{t+1}=\binom{A_t+U_t-2}{k-2}=\binom{n-t-2}{k-2}
\end{equation}
potential edges containing $v_{t+1}$ and $u$ but not containing any of
the $t$ vertices previously explored. Since each such edge is present with probability $p$,
the probability that $u$ becomes active during step $t+1$ is
\[
 \pi_1=1-(1-p)^{c_{t+1}} = pc_{t+1}+O(p^2c_{t+1}^2) = \la (n-t)^{k-2}n^{-k+1}+O(1/n^2).
\]
Of course, these events are not independent for different $u$,
but this does not matter for the expectation. In particular,
\begin{equation}\label{Eeta}
 \E(\eta_{t+1}\mid\cF_t) = U_t' \pi_1 
= U_t' p c_{t+1} +O(1/n).
\end{equation}
Fortunately for the subsequent analysis, this expression depends on $U_t'$
in a linear way.

We next estimate the conditional variance of $\eta_{t+1}$ given $\cF_t$; here we 
do not need to be so accurate. Let $u_1$ and $u_2$ be distinct unseen vertices
(other than $v_{t+1}$ if that happens to be unseen).
The probability that $u_1$ and $u_2$
both become active is $\pi_2+\pi_3$, where $\pi_2$
is the probability that we find an edge containing $v_{t+1}$, $u_1$ and $u_2$,
and $\pi_3$ is the probability that this does not happen, but we find
disjoint edges activating $u_1$ and $u_2$.
Supposing that $n-t\to\infty$, then
\[
 \pi_2 = 1-(1-p)^{\binom{n-t-3}{k-3}} \sim p\binom{n-t-3}{k-3} \sim \la (k-2) (n-t)^{k-3} n^{-k+1}.
\]
Also, it is easy to check that $\pi_3\sim \pi_1^2$.  Hence,
\begin{eqnarray}
 \Var(\eta_{t+1}\mid \cF_t) &=& U_t'(U_t'-1)(\pi_2+\pi_3) + U_t'\pi_1 - (U_t'\pi_1)^2\nonumber \\
 &\sim& (U_t')^2 \pi_2 +U_t'\pi_1 \nonumber \\
  &\sim& \la (k-2) (1-t/n)^{k-3} \frac{U_t^2}{n^2} + \la (1-t/n)^{k-2}\frac{U_t}{n}. \label{var}
\end{eqnarray}
In particular, the maximum possible value satisfies
\begin{equation}\label{varbdd}
 \max_t \sup_\Omega \Var(\eta_{t+1}\mid \cF_t) \le \la(k-1) =O(1).
\end{equation}

Let $D_t=\E(\eta_t-1\mid \cF_{t-1})$. Recalling that
$U_t=n-t-A_t=n-t-(X_t+C_t)$, and noting that $U_t'=U_t-(C_{t+1}-C_t)=n-t-X_t-C_{t+1}$, from \eqref{Eeta} we 
see that
\begin{equation}\label{dt}
 D_{t+1} = pU_t'c_{t+1}-1 +O(1/n) = \alpha_{t+1} (n-t-X_t-C_{t+1})-1 +O(1/n),
\end{equation}
where 
\[
 \alpha_t = pc_t = p\binom{n-t-1}{k-2}.
\]
Set $\Delta_{t+1}=X_{t+1}-X_t-D_{t+1}$, so $\Delta_{t+1}$ is $\cF_{t+1}$-measurable
and $\E(\Delta_{t+1}\mid\cF_t)=0$, by the definition
of $D_{t+1}$. Note that
\begin{eqnarray}
 X_{t+1} &=& X_t + D_{t+1} +\Delta_{t+1} \nonumber \\
 &=& (1-\alpha_{t+1}) X_t + \alpha_{t+1}(n-t) -1 + \Delta_{t+1} - \alpha_{t+1}C_{t+1} +O(1/n).\label{Xrec}
\end{eqnarray}

We shall approximate $(X_t)$ by the sum of a deterministic sequence and a martingale.
To this end, define a deterministic sequence $(x_t)$ by $x_0=0$ and
\begin{equation}\label{xrec}
 x_{t+1} = (1-\alpha_{t+1})x_t +\alpha_{t+1}(n-t) -1.
\end{equation}
Subtracting \eqref{xrec} from \eqref{Xrec} we see that
\begin{equation}\label{Xxrec}
 X_{t+1}-x_{t+1} = (1-\alpha_{t+1})(X_t-x_t) + \Delta_{t+1}-\alpha_{t+1}C_{t+1}+E_{t+1},
\end{equation}
where $E_{t+1}$ is an `error term' with $E_{t+1}=O(1/n)$.
Defining
\[
 \beta_t=\prod_{i=1}^t (1-\alpha_i),
\]
the recurrence relation \eqref{Xxrec} may be easily solved to give
\begin{equation}\label{Xx}
 X_t-x_t = \sum_{i=1}^t \frac{\beta_t}{\beta_i} (\Delta_i-\alpha_iC_i+E_i).
\end{equation}
Note that $0<\alpha_i<1$ for each $i$, so the sequence $\beta_t$ is decreasing.

Motivated by this formula, let
\begin{equation}\label{St}
 S_t = \sum_{i=1}^t \beta_i^{-1}\Delta_i,
\end{equation}
so $(S_t)$ is a martingale with respect to $(\cF_t)$, and set
\[
 \tX_t = x_t+\beta_t S_t:
\]
this is our desired approximation for $(X_t)$.

\begin{lemma}\label{Wc}
For any $p=p(n)=O(n^{-k+1})$ we have
\[
 | X_t-\tX_t | =O(tC_t/n),
\]
uniformly in $1\le t\le n$.
\end{lemma}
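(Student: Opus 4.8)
The plan is to read $X_t-\tX_t$ straight off the two explicit expressions we already have. By \eqref{St} and the definition of $\tX_t$,
\[
 \tX_t - x_t = \beta_t S_t = \sum_{i=1}^t \frac{\beta_t}{\beta_i}\Delta_i,
\]
so subtracting this from the solved recurrence \eqref{Xx} cancels the martingale increments $\Delta_i$ completely, leaving
\[
 X_t - \tX_t = \sum_{i=1}^t \frac{\beta_t}{\beta_i}\bb{E_i - \alpha_i C_i}.
\]
Thus the whole lemma reduces to bounding this single sum: the point of the definition of $\tX_t$ is precisely that $\beta_t S_t$ reproduces the $\Delta_i$-contribution to $X_t-x_t$, so the discrepancy sees only the deterministic error $E_i=O(1/n)$ and the `new-component correction' $\alpha_i C_i$.

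Next I would record two elementary facts. Since $0<\alpha_j<1$ for every $j$, the weight $\beta_t/\beta_i=\prod_{j=i+1}^t(1-\alpha_j)$ lies in $(0,1]$ for all $i\le t$ (the empty product being $1$ at $i=t$), so every coefficient in the sum has absolute value at most $1$. Moreover $\alpha_i = p\binom{n-i-1}{k-2}\le p\binom{n}{k-2}$, and since $p=O(n^{-k+1})$ and $\binom{n}{k-2}=O(n^{k-2})$ this gives $\alpha_i=O(1/n)$ uniformly in $1\le i\le n$. Combining these with the monotonicity $C_i\le C_t$ for $i\le t$,
\[
 |X_t-\tX_t| \le \sum_{i=1}^t\bb{|E_i|+\alpha_i C_i} \le t\cdot O(1/n) + C_t\sum_{i=1}^t\alpha_i = O(t/n) + C_t\cdot O(t/n).
\]
Finally, since $A_0=0$ a new component is started in step $1$, so $C_t\ge 1$ for every $t\ge1$, and the first term is absorbed into the second; this yields $|X_t-\tX_t|=O(tC_t/n)$ uniformly in $1\le t\le n$, as required.

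I do not expect any genuine obstacle: the estimate is short once one notices the cancellation of the $\Delta_i$. The only points that need a little care are that the $O(1/n)$ bound on $\alpha_i$ is genuinely uniform in $i$ (it is, because $\binom{n-i-1}{k-2}$ only decreases in $i$), and that one must use this termwise bound rather than the telescoping identity $\sum_{i=1}^t(\beta_t/\beta_i)\alpha_i=1-\beta_t$ — the latter would only give $|X_t-\tX_t|=O(C_t)$, losing the factor $t/n$ that the lemma demands when $t=o(n)$.
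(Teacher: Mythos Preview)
Your proof is correct and follows exactly the paper's approach: derive the identity $X_t-\tX_t=\sum_{i=1}^t(\beta_t/\beta_i)(E_i-\alpha_iC_i)$ from \eqref{Xx} and the definition of $\tX_t$, then use $0<\beta_t/\beta_i\le1$, $E_i,\alpha_i=O(1/n)$, and $C_i\le C_t$. Your added remark that $C_t\ge1$ absorbs the $O(t/n)$ term and your warning about the telescoping identity are helpful elaborations, but the core argument is identical to the paper's.
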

\begin{proof}
From \eqref{Xx} and the definition of $\tX_t$ we have
\[ 
 X_t-\tX_t = \sum_{i=1}^t \frac{\beta_t}{\beta_i} (E_i-\alpha_i C_i).
\]
The result follows from the fact that $\beta_t/\beta_i=\prod_{j=i+1}^t(1-\alpha_j)$
is between $0$ and $1$, the bounds $E_i,\alpha_i=O(1/n)$,
and the fact that $C_i\le C_t$ for $i\le t$.
\end{proof}

We next analyze the deterministic trajectory $(x_t)$.
Setting $x_t=n-t-y_t$, the relation \eqref{xrec} can be rearranged
to give $y_{t+1}=(1-\alpha_{t+1})y_t$. Since $y_0=n-x_0=n$, we see that
$y_t=n\beta_t$, so
\begin{equation}\label{xbt}
 x_t = n-t-n\beta_t.
\end{equation}

Recall that the $\alpha_i$ are $O(1/n)$. Since there are at most
$n$ terms in the sum, it follows that
\[
 \log\beta_t = \sum_{i=1}^t \log(1-\alpha_i) = -\sum_{i=1}^t \alpha_i +O(1/n).
\]
From the definition of $\alpha_i=pc_i$ we have
\[
 \sum_{i=1}^t\alpha_i = p\sum_{i=1}^t\binom{n-i-1}{k-2} = p\left(\binom{n-1}{k-1}-\binom{n-t-1}{k-1}\right).
\]
It follows that
\begin{eqnarray}
 \log \beta_t &=& -\frac{pn^{k-1}}{(k-1)!} \left(1-(1-t/n)^{k-1}\right)+O(1/n) \nonumber\\
 &=& -\frac{\la}{k-1} \left(1-(1-t/n)^{k-1}\right)+O(1/n).\label{bsim}
\end{eqnarray}
Define the function $g=g_{k,\la}$ by
\begin{equation}\label{gdef}
  g_{k,\la}(\tau) = 1-\tau - \exp\left( -\frac{\la}{k-1} \Bb{1-(1-\tau)^{k-1}} \right)
\end{equation}
and (for compatibility with the notation in \cite{BR_walk}) set
\[
 f(t)=f_{n,k,\la}(t) = ng(t/n).
\]
Then \eqref{xbt} and \eqref{bsim} imply that
that $x_t=f(t)+O(1)$, uniformly in $0\le t\le n$. In other words,
the function $f$ or $g$ represents a (rescaled in the case of $g$) idealized
form of the deterministic approximation $(x_t)$ to $(X_t)$.

From \eqref{rldef} and \eqref{rkldef} it is easy to check that $\rho=\rho_{k,\la}$
satisfies $g(\rho)=0$.
Note that
\[
 g'(\tau) = -1 +\la(1-\tau)^{k-2} \exp\left( -\frac{\la}{k-1} \Bb{1-(1-\tau)^{k-1}} \right),
\]
so $g'(0)=\la-1$. Also, recalling that $(1-\rho)^{k-1}=1-\rho_{\la}=\las/\la$,
\begin{equation}\label{g'rho}
 g'(\rho) = -1 + \la(1-\rho)^{k-1} = -(1-\las).
\end{equation}
Furthermore,
\begin{multline*}
 g''(\tau) = \left( -\la(k-2)(1-\tau)^{k-3} - \bb{\la(1-\tau)^{k-2}}^2 \right) \\
 \exp\left( -\frac{\la}{k-1} \Bb{1-(1-\tau)^{k-1}} \right),
\end{multline*}
so $g''\le 0$. Hence $g$ is concave, so $f$ is concave.
Also, $\sup\{|g''(\tau)|:0\le\tau\le 1\}=O(1)$, so $f''(t)=O(1/n)$, uniformly in $0\le t\le n$.

Note that
\[
 g''(0)=-\bb{\la(k-2)+\la^2} = -(k-1) +O(\eps),
\]
where $\eps=\la-1$.
Since (as is easily checked)
$g'''$ is uniformly bounded, it follows that for $\tau=O(\eps)$
we have
\begin{equation}\label{gexp}
 g(\tau) = g(0) + \tau g'(0) + \tau^2 g''(0)/2 + O(\tau^3)
 = \eps \tau - (k-1) \tau^2/2 +O(\eps^3).
\end{equation}
With this preparation behind us, the proof of Theorem~\ref{th+} will follow
that in~\cite{BR_walk} very closely, so we give only an outline. We use the same
notation as in~\cite{BR_walk} whenever possible.

\begin{proof}[Proof of Theorem~\ref{th+}]
Firstly, we have $\log\beta_t=O(1)$ uniformly in $0\le t\le n$. This and \eqref{varbdd}
imply that the increments $\beta_i^{-1}\Delta_i$
of the martingale $(S_t)$ have variance $O(1)$, so $\Var(S_t)=O(t)$
for any deterministic $t=t(n)$. Hence, by
Doob's maximal inequality, $\max_{i\le t} |S_t|=\Op(\sqrt{t})$.
In particular, 
\begin{equation}\label{tX1}
 \tX_t = x_t+\beta_t S_t = f(t)+\beta_tS_t+O(1) = f(t)+\Op(\sqrt{t}).
\end{equation}

Let $\sigma_0=\sqrt{\eps n}$, let $\omega=\omega(n)$ tend to infinity
slowly (with $\omega^6=o(\eps^3 n)$), and let
$t_0=\omega\sigma_0/\eps$. Write $Z=-\inf\{X_t:t\le t_0\}$ for the number of components
completely explored by time $t_0$, and $T_0$ for the time at which
we finish exploring the last such component. Since $f'(0)=g'(0)=\eps$,
and we have the bound $X_t-\tX_t=O(tC_t/n)$ from Lemma~\ref{Wc},
the proof of \cite[Lemma 6]{BR_walk} goes through \emph{mutatis mutandis}
to show that $Z\le \sigma_0/\omega$ and $T_0\le\sigma_0/(\eps \omega)$.
Note for later that the latter bound gives $T_0=\op(\sqrt{n/\eps})$.

Writing $t_1=\rho_{k,\la} n$ (ignoring the irrelevant rounding to integers),
let $T_1$ be the first time at which we finish exploring a component after time $t_0$.
Arguing exactly as in~\cite{BR_walk} we see that $t_1-t_0\le T_1\le t_1+t_0$ holds whp,
and indeed that
\begin{equation}\label{T1}
 T_1 = t_1 + \tX_{t_1}/(1-\las) + \op(\sigma_0/\eps).
\end{equation}
(Relation \eqref{T1} takes the place of equation (21) in \cite{BR_walk}.)

We claim that for each $t\le t_1$ we have $U_t=u_t+\op(n)$, where now
\begin{equation}\label{ubound}
 u_t = n \exp\left( -\frac{\la}{k-1} \Bb{1-(1-t/n)^{k-1}} \right).
\end{equation}
Noting that $u_t=n-t-f(t)$, and recalling that $U_t=n-t-A_t=n-t-(X_t+C_t)$,
this can be deduced from the crude bound \eqref{tX1} on $\tX_t$ and Lemma~\ref{Wc}
by arguing as in~\cite{BR_walk}. Note that, as pointed out to us
(in the graph case) by Lutz Warnke, the approximate form
of this formula is easy to guess: ignoring vertices selected to start
new components, a given vertex $u$ is unseen at time $t$ if and only if
none of the potential edges containing $u$ tested during the first $t$ steps
was found to be present. There are $\binom{n-1}{k-1}-\binom{n-t-1}{k-1}$ such
edges: those containing $u$ and at least one of the $t$ explored vertices.

From \eqref{var} and \eqref{ubound}, the sum of the conditional variances
of the first $t_1$ increments of $(S_t)$ is concentrated around
\begin{equation}\label{varsum}
 \sum_{i=1}^{t_1}\beta_i^{-2}\left(\la (k-2) (1-i/n)^{k-3} (u_i/n)^2
 + \la (1-i/n)^{k-2}u_i/n\right).
\end{equation}
Although the distribution of the increments is not quite as nice as in the graph
case, it is easy to see that the conditional distribution of $\eta_t$ given $\cF_{t-1}$
is dominated by $k-1$ times a binomial random variable with mean $O(1)$. (The binomial 
random variable is the number
of edges found; each contributes a number of new unseen vertices between $0$
and $k-1$.) Thus any fixed moment of $\eta_t$ is bounded by a constant,
and this transfers to $D_t=\eta_t-\E(\eta_t\mid \cF_{t-1})$ and hence to $\beta^{-t}D_t$.
This condition (for the fourth moment) and concentration of the sum
of the conditional variances is more than enough for a martingale central limit theorem 
such as Brown~\cite[Theorem 2]{Brown}, and it follows that $S_{t_1}$,
and hence $\tX_{t_1}$ and thus $T_1$, is asymptotically normally distributed.

For the variance, 
the sum in \eqref{varsum} is well approximated by an integral, and after a slightly unpleasant
calculation one sees that
\[
 \Var(\tX_{t_1}) = \Var(\beta_tS_t) \sim \bigl( \la(1-\rho)^2 - \las(1-\rho) +\rho(1-\rho) \bigr)n = (1-\las)^2\sigma^2,
\]
where $\rho=\rho_{k,\la}$ and $\sigma=\sigma_{k,\la}$ are defined in \eqref{rkldef} and \eqref{skldef}.
Recalling \eqref{T1}, and noting that $T_0=\op(\sqrt{n/\eps})=\op(\sigma)$,
it follows that $T_1$ and thus
$T_1-T_0$ is asymptotically normal with mean $\rho n$ and variance $\sigma^2$,
so there is a component whose size has the required distribution.

Finally, it is not hard to check that there is whp no larger
component, using the fact that what remains to be explored after time $T_1$ is a
subcritical random hypergraph.  Specifically, one can either apply a
martingale argument as in Nachmias and Peres~\cite{NP_giant}, or simply
apply the subcritical case of the results proved by Karo\'nski and \L uczak~\cite{KL_giant}.
\end{proof}

\begin{proof}[Proof of Theorem~\ref{th_crit}]
Suppose that $p=\la (k-2)!n^{-k+1}$ with $\la=1+\eps$,
where $\eps=\eps(n)$ satisfies
$\eps n^{1/3}\to (k-1)^{2/3}\alpha$ as $n\to\infty$, for some $\alpha\in\mathbb{R}$ constant.
As before we consider the random walk $(X_t)$, but now only for
$t\le An^{2/3}$ for a large constant $A$. Aldous~\cite{Aldous}
shows in the graph case that, appropriately rescaled,
the process $(X_t)$ converges to a deterministic quadratic plus
a standard Brownian motion. We show the same here,
with slightly different rescaling.

The argument giving $x_t=ng(t/n)+O(1)$ above assumed only that $\la=O(1)$, 
which applies here.
Writing $t=s (k-1)^{-1/3}n^{2/3}$, using \eqref{gexp}
we see that for $t\le An^{2/3}$ we have
\begin{multline*}
 x_t/((k-1)n)^{1/3} = (k-1)^{-1/3}n^{2/3}g\bigl(s (k-1)^{-1/3}n^{-1/3}\bigr)+o(1)  \\
 = n^{1/3}\eps(k-1)^{-2/3}s - s^2/2 +o(1) = \alpha s-s^2/2+o(1).
\end{multline*}
In other words, the deterministic limiting trajectory is quadratic.
Moreover, in this range (indeed, whenever
$t=o(n)$) we have $\beta_t\sim 1$ (see \eqref{bsim}) and, from \eqref{var},
the martingale differences appearing in \eqref{St} have (conditional)
variance
\[
 \beta_i^{-2}\Var(\Delta_i\mid\cF_{i-1}) \sim \Var(\Delta_i\mid\cF_{i-1}) =\Var(\eta_i\mid\cF_{i-1})\sim k-1.
\]
It follows using the bounds on $|X_t-x_t-S_t|$ established above
that the rescaled process whose value at rescaled time $s$
is $X_{s (k-1)^{-1/3}n^{2/3}}/((k-1)n)^{1/3}$ converges to the quadratic
function above plus a standard Brownian motion, i.e., to $W^\alpha(s)$.
The rest of the argument is exactly as in the original paper of Aldous~\cite{Aldous}, 
so we omit the details, noting only that it is the \emph{time} rescaling factor
that appears in the component sizes.
\end{proof}
For slightly more details of a related argument, see~\cite[Section 4]{Riordan_walk}.

\medskip
\noindent
{\bf Acknowledgement.}
This research was triggered by a question that Tomasz \L uczak asked us during the conference
Random Structures and Algorithms 2011 in Atlanta. We are grateful to him for raising this question.

\end{document}